\documentclass[a4paper, 11pt]{amsart}

\usepackage[T2A,T1]{fontenc}  
\usepackage[utf8]{inputenc}

\usepackage[russian, french, british]{babel}

\usepackage{amssymb, amsmath}
\usepackage{mathtools}
\usepackage{bm}

\usepackage[cal=boondoxo]{mathalfa}

\usepackage{comment}
\usepackage{url}

\usepackage[hidelinks]{hyperref}

\newtheorem{THEOREM}{Theorem}[section]

\newtheorem{theorem}[THEOREM]{Theorem}
\newtheorem{lemma}[THEOREM]{Lemma}
\theoremstyle{definition}
\newtheorem{definition}[THEOREM]{Definition}

\newtheorem{fact}[THEOREM]{Fact}
\newtheorem{remark}[THEOREM]{Remark}
\newtheorem{Observation}[THEOREM]{Observation}
\newenvironment{observation}{\begin{Observation}}
{\end{Observation}}
\newtheorem{Notation}[THEOREM]{Notation}
\newenvironment{notation}{\begin{Notation}}
{\end{Notation}}

\newcommand{\forces}{\Vdash}

\newcommand{\supt}{\text{supt}}

\newcount\skewfactor
\def\mathunderaccent#1#2 {\let\theaccent#1\skewfactor#2
\mathpalette\putaccentunder}
\def\putaccentunder#1#2{\oalign{$#1#2$\crcr\hidewidth
\vbox to.2ex{\hbox{$#1\skew\skewfactor\theaccent{}$}\vss}\hidewidth}}
\def\name{\mathunderaccent\tilde-3 }

\newcommand{\rest}{\upharpoonright}

\newcommand{\cf}{\text{cf}}

\newcommand{\eop}{\bigstar}
\newcommand{\DD}{\mathcal D}

\newcommand{\eod}{\scriptstyle\blacksquare}

\title{A note on iterating strongly $(<\lambda)$-closed  stationary $\lambda^+$-cc forcing}

\author{Mirna D{\v z}amonja, by-name Logique Consult, Paris}

\date{}

\begin{document}

\maketitle

\begin{abstract} We give an exposition of an iteration theorem for iterating $(<\lambda)$-closed  stationary $\lambda^+$-cc forcing with supports of size $<\lambda$
and preserving these two properties. We discuss the relation of this theorem with other iteration theorems and forcing axioms that have appeared in the literature, notably the one from \cite{Sh80}.
\end{abstract}

\section{Introduction}\label{sec:intro}  In his work on generalising Martin's axiom to higher cardinals \cite{Sh80}, 
Saharon Shelah introduced the notion of stationary $\lambda^+$-cc. Various versions of forcing axioms using this notion have appeared, including our joint work \cite{5authorsforcing}. This note aims to give a clean presentation of the main arguments and clarify some misstatements surrounding the topic.

Theorem \ref{th:iteration} is our rendition of a theorem due to Shelah \cite[Th 1.1]{Sh80}. Shelah's theorem is stated in terms of $(<\lambda)$-closed forcing rather than strongly $(<\lambda)$-closed forcing (see Definition \ref{def:strongclosure}). In Remark \ref{rem:whystrong} we comment on the difference between the two notions and why that difference does not play a role in the applications.

On the other hand, Shelah also has the requirement that any two compatible conditions have the least upper bound (this is called {\em well met}) and a  requirement on the size of the individual forcing. The latter is likely done in view of preserving cardinals
$\lambda^{++}$ and above and a wish to have a forcing axiom rather than an iteration theorem. The proof provided in \cite{Sh80} is written in terms which are not precise enough, so we are not entirely sure that his theorem as stated is correct, although we have used it and cited it often in our own work, including \cite{DzShPrikry}, \cite{withCharlesandKomjath}, \cite{3authorsforcing}. 
This was done having in mind our rendition of the theorem as a back up, considering that it was just a minor improvement of what Shelah had in mind in
\cite[Th 1.1]{Sh80}. In all the applications cited, the theorem as presented here applies. We describe some of the other relevant papers and theorems that have appeared.

A strengthening of Shelah's theorem where the condition of every two conditions having the least upper bound was replaced by the condition called {\em countable 
parallel closure} in \cite[Th. 1.2]{5authorsforcing}. The reason this was done is that in the application in the paper, the individual forcing used is not well met. Perhaps due to differences in the approach of doing forcing, this author is not entirely convinced by the proof we presented
in \cite[Th. 1.2]{5authorsforcing}. This does not alter the truth of the main theorems in \cite{5authorsforcing} since the applications of \cite[Th. 1.2]{5authorsforcing} can simply be replaced by applications of Theorem \ref{th:iteration}, given that the individual forcings in question fit the required framework.

The author had been curious about what Shelah really had in mind in his theorem \cite[Th 1.1]{Sh80}, so at the occasion of a preprint by John Baldwin, Alexei Kolesnikov and Shelah proving a theorem by an application of \cite[Th 1.1]{Sh80} without the well-met condition, she pointed out to the authors that they are misquoting
\cite[Th 1.1]{Sh80} (see the thanks at the end of the Introduction of that paper). This resulted in them improving the preprint to include in the published version of the paper a discussion and a proposition that the well-met condition is after all not necessary \cite[Rem. 3.5 and Prop. 3.6]{zbMATH05609396}, with a short proof different than what is provided here. We are not sure to understand their proof, but this may be due to the fact that we have not put much effort in studying it, since we only saw it in its published version several years after the discussions were taking place.  The discussion also resulted in the paper
\cite{zbMATH07736866} by Shelah in which he concludes that, after all, some form of the well met condition is necessary in the numerous versions of his forcing axioms derived from the one in \cite[Th 1.1]{Sh80}. 

It would therefore appear that Theorem \ref{th:iteration} finally is not the theorem that Shelah originally had in mind. His verdict on his two papers
\cite{zbMATH05609396} and \cite{zbMATH07736866} which come to contradictory conclusions is not clear. 

We note that our Theorem \ref{th:iteration} is not stated as a forcing axiom but as an iteration theorem. Having a forcing axiom out of an iteration theorem is
considered straightforward, but in our opinion one has to be careful as to the fact that there might be forcing notions in the extension that fit the
framework of a forcing axiom and yet were not considered during the iteration. 

A brief look at the applications
intended in \cite{zbMATH05609396}, which are \cite[Cl. 3.7]{zbMATH07736866} (with the $(<\mu)$-closure as proved) and 
\cite[Cl. 3.14]{zbMATH07736866}  
(where the proof of the $(<\mu)$-closure is omitted), would seem to imply that Theorem \ref{th:iteration} might be sufficient. This is as the
$(<\mu)$-closure of the forcing seems to us to be witnessed in a way sufficient for an application of Theorem \ref{th:iteration}. 

In conclusion, various papers appearing in the years since  \cite{Sh80} have cast a doubt in our mind at our assumption that it is obvious that Shelah must have had in mind something as Theorem \ref{th:iteration}. Hence, we have felt that it is of interest to present the theorem directly as is done in this note.

\section{Definitions and Basic Facts}\label{sec:def} Throughout, $\lambda>\aleph_0$ is a cardinal satisfying $\lambda^{<\lambda}=\lambda$.

\begin{notation}\label{not:s-lambda+-lambda} Suppose that $\theta$ is a regular cardinal and $\alpha>\theta$. We denote
\[
S^\alpha_\theta=\{i<\alpha:\cf(i)=\theta\}.
\]
\end{notation}

\begin{definition}\label{def:statt-cc} A forcing notion $\mathbb P$ is {\em stationary $\lambda^+$-cc with respect to a 
sequence $\bar{p}=\langle p_\alpha:\,\alpha<\lambda^{+}\rangle$ of conditions in $\mathbb P$} if there is a club $E=E_{\bar{p}}\subseteq \lambda^+$ and a regressive function $f=f_{\bar{p}}$ on $E$ such that for every $\alpha, \beta\in S^{\lambda^+}_\lambda$ we have 
\[
f(\alpha)=f(\beta)\implies p_\alpha, p_\beta \mbox{ are compatible}.
\]
We say that $\mathbb P$ is {\em stationary $\lambda^+$-cc} if it is stationary $\lambda^+$-cc with respect to 
$\bar{p}$ for every sequence $\bar{p}=\langle p_\alpha:\,\alpha<\lambda^{+}\rangle$ of conditions in $\mathbb P$.
$\eod_{\mbox{\tiny{Def.}} \ref{def:statt-cc}}$
\end{definition}

\begin{observation}\label{obs:implies-cc} A stationary $\lambda^+$-cc is in particular $\lambda^+$-cc.
\end{observation}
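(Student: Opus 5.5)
Suppose $\mathbb P$ is stationary $\lambda^+$-cc. We argue by contradiction: assume $\mathbb P$ has an antichain $A$ of size $\lambda^+$. Enumerate $\lambda^+$ many distinct elements of $A$ without repetition as $\bar p=\langle p_\alpha:\alpha<\lambda^+\rangle$. By Definition \ref{def:statt-cc} applied to this sequence, we obtain a club $E\subseteq\lambda^+$ and a regressive function $f$ on $E$ such that $f(\alpha)=f(\beta)$ for $\alpha,\beta\in S^{\lambda^+}_\lambda$ implies $p_\alpha,p_\beta$ compatible.

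The key step is Fodor's lemma. The set $S^{\lambda^+}_\lambda$ is stationary in $\lambda^+$: $\lambda$ is regular and $\lambda<\lambda^+$, so every club in $\lambda^+$ contains a point of cofinality $\lambda$ (its $\lambda$-th element). Hence $E\cap S^{\lambda^+}_\lambda$ is stationary, and we may assume each element of it is nonzero, so $f$ is genuinely regressive there. Applying Fodor's lemma at the regular uncountable cardinal $\lambda^+$, we get a stationary $S'\subseteq E\cap S^{\lambda^+}_\lambda$ on which $f$ is constant. In particular $S'$ has at least two elements $\alpha\neq\beta$.

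Then $f(\alpha)=f(\beta)$, so $p_\alpha$ and $p_\beta$ are compatible. But they are distinct members of the antichain $A$, which is a contradiction. Hence every antichain of $\mathbb P$ has size $\le\lambda$, that is, $\mathbb P$ is $\lambda^+$-cc.

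There is no real obstacle. The only points needing care are checking that $S^{\lambda^+}_\lambda\cap E$ is stationary, which uses the regularity of $\lambda$ (here $\lambda^{<\lambda}=\lambda$ guarantees $\lambda$ is regular), and arranging the enumeration to be injective so that $\alpha\neq\beta$ gives distinct conditions. In fact we only need $f$ to be non-injective on $E\cap S^{\lambda^+}_\lambda$. A simple cardinality count does not give this, since $f$ takes values below $\lambda^+$, which is why Fodor's lemma is needed.
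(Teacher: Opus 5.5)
Your proof is correct and follows the paper's argument: apply Fodor's lemma to the regressive $f$ on the stationary set $E\cap S^{\lambda^+}_\lambda$ to get a stationary set on which $f$ is constant, so two distinct indices give compatible conditions and $\bar p$ cannot enumerate an antichain. Your extra points (injectivity of the enumeration, and regularity of $\lambda$ from $\lambda^{<\lambda}=\lambda$ so that $S^{\lambda^+}_\lambda$ is stationary) make explicit what the paper leaves implicit.
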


\begin{proof} Suppose that $\mathbb P$ is a stationary $\lambda^+$-cc forcing. Let $\bar{p}=\langle p_\alpha:\,\alpha<\lambda^{+}\rangle$ 
be a sequence of conditions in $\mathbb P$ and let $E=E_{\bar{p}}$ and $f=f_{\bar{p}}$ be as in Definition \ref{def:statt-cc}. Let 
$S\subseteq S^{\lambda^+}_\lambda\cap E$ be stationary such that $f$ is constant on $S$. Hence for every $\alpha, \beta \in S$ the conditions
$p_\alpha, p_\beta$ are compatible. In particular $\bar{p}$ does not enumerate an antichain.  
$\eop_{\ref{obs:implies-cc}}$
\end{proof}

\begin{fact}\label{fact:clubs} Suppose that $\mathbb P$ is $\lambda^+$-cc and $\name{D}$ is a $\mathbb P$-name of a club of $\lambda^+$. Then there is
a club $C$ of $\lambda^+$ such that $\mathbb P$ forces that $C\subseteq \name{D}$.
\end{fact}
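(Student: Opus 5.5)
The plan is the standard one: use the $\lambda^+$-cc to bound, in the ground model, the possible values of the name by a function $g\colon\lambda^+\to\lambda^+$, and then take $C$ to be the club of closure points of $g$ intersected with the limit ordinals. First, for each $\alpha<\lambda^+$ the statement ``the least element of $\name{D}$ above $\alpha$'' is a $\mathbb P$-name $\name{\beta}_\alpha$ for an ordinal below $\lambda^+$; it is forced to exist because $\name{D}$ is forced to be unbounded in $\lambda^+$.

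Fix $\alpha$ and choose a maximal antichain $A_\alpha$ of conditions each deciding $\name{\beta}_\alpha$. This is possible by Zorn's lemma, since the set of conditions deciding $\name{\beta}_\alpha$ is dense. By the $\lambda^+$-cc, $|A_\alpha|\le\lambda$. Hence the set $B_\alpha$ of values decided by members of $A_\alpha$ is a subset of $\lambda^+$ of size $\le\lambda$, and so is bounded because $\lambda^+$ is regular. Let $g(\alpha)=\sup B_\alpha+1<\lambda^+$. Then, since $A_\alpha$ is maximal, $\mathbb P$ forces $\alpha<\name{\beta}_\alpha<g(\alpha)$. In other words, it is forced that $\name{D}\cap(\alpha,g(\alpha))\neq\emptyset$.

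Now let $C=\{\delta<\lambda^+:\delta \mbox{ is a limit ordinal and } g[\delta]\subseteq\delta\}$. This is a club of $\lambda^+$ in $V$. It is closed because both conditions are preserved under increasing unions. It is unbounded by the usual $\omega$-step closing-off argument: iterate $\gamma\mapsto\sup_{\xi\le\gamma} g(\xi)+1$ and take the supremum, again using regularity of $\lambda^+$. Given $\delta\in C$, in the extension every $\alpha<\delta$ has an element of $\name{D}$ in $(\alpha,g(\alpha))\subseteq(\alpha,\delta)$. Thus $\name{D}\cap\delta$ is unbounded in $\delta$, and since $\name{D}$ is forced to be closed, $\delta\in\name{D}$. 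Hence $\mathbb P\forces C\subseteq\name{D}$.

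The only step needing care is the bounding step: there we must use the chain condition in $V$ to get a single ground-model bound $g(\alpha)$ that works below every condition, rather than just below some condition. This is exactly what the maximal antichain together with $\lambda^+$-cc and regularity of $\lambda^+$ provide. Note that by Observation~\ref{obs:implies-cc} the fact applies in particular to stationary $\lambda^+$-cc forcings.
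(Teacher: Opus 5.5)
Your proof is correct and is essentially the paper's argument. The paper uses the $\lambda^+$-cc to cover the possible values of $\name{f}(\alpha)$, where $\name{f}$ is the increasing enumeration of $\name{D}$, by a ground-model set $F(\alpha)$ of size $\le\lambda$, and then takes a club of closure points, at which $\name{f}(\delta)=\delta$ is forced. You instead bound the least element of $\name{D}$ above $\alpha$ and invoke the closedness of $\name{D}$ directly, which is only a cosmetic difference (as in the paper, you should require $\delta>0$ in $C$ if you count $0$ as a limit ordinal).
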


\begin{proof} Let $\name{f}$ be a $\mathbb P$-name for an increasing enumeration of $\name{D}$. By the property of $\lambda^+$-cc, there is
a function $F\in \mathbf V$ associating to every ordinal $\alpha<\lambda^+$ a set of $\le\lambda$ many ordinals in $\lambda^+$ such that
it is forced by $\mathbb P$ that for every $\alpha <\lambda^+$ we have $\name{f}(\alpha)\in F(\alpha)$. Then there is a club $C$ of $\lambda^+$
satisfying that for every $\delta\in C$ and $\alpha<\delta$ we have $\sup(F(\alpha))<\delta$. It follows from the fact that 
$\name{D}$ is forced to be a club that for every such $\delta$ we must have $\name{f}(\delta)=\delta$ and in particular $\mathbb P$ forces that $C\subseteq \name{D}$.
$\eop_{\ref{fact:clubs}}$
\end{proof}

\begin{fact}\label{lem:combining} Suppose that $\langle f_i:\,i<\theta\rangle$ are regressive functions on some club $C$ of $\lambda^{+}$, for some $1<\theta<\lambda$.
Then there is a club $D$ of $\lambda^{+}$ and a regressive function $g$ on $D$ such that for every $\alpha, \beta\in C\cap D\cap S^{\lambda^+}_\lambda$ we have
\[
[g(\alpha)=g(\beta)] \implies \bigwedge_{i<\theta}  [f_i(\alpha)= f_i(\beta)].
\]
\end{fact}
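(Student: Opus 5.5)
We plan to code the tuple $\langle f_i(\alpha):\,i<\theta\rangle$ by a single ordinal below $\alpha$, using $\lambda^{<\lambda}=\lambda$ and the fact that $\alpha\in S^{\lambda^+}_\lambda$ has cofinality $\lambda>\theta$.

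First fix, for each $\gamma<\lambda^+$, a bijection (or injection) $e_\gamma:\gamma\to\lambda$; for $\gamma\ge\lambda$ this exists since $|\gamma|\le\lambda$. For $\alpha\in C\cap S^{\lambda^+}_\lambda$, the set $\{f_i(\alpha):i<\theta\}$ is a subset of $\alpha$ of size $<\lambda=\cf(\alpha)$, hence bounded in $\alpha$. Let $h(\alpha)<\alpha$ be the least element of $C$ (or simply the least ordinal $\ge\lambda$, taken as an ordinal) above $\sup_{i<\theta}f_i(\alpha)$; $h$ is regressive on $C\cap S^{\lambda^+}_\lambda$. Then $\alpha\mapsto \langle e_{h(\alpha)}(f_i(\alpha)):\,i<\theta\rangle$ is a function into ${}^\theta\lambda$, which has size $\lambda^\theta\le\lambda^{<\lambda}=\lambda$. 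Fix an injection $c:{}^\theta\lambda\to\lambda$ in $\mathbf V$.

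Now define, for $\alpha\in C\cap S^{\lambda^+}_\lambda$ with $\alpha>\lambda$, the value $g(\alpha)$ to be an ordinal below $\alpha$ coding the pair $(h(\alpha), c(\langle e_{h(\alpha)}(f_i(\alpha)):i<\theta\rangle))$, e.g.\ via the Gödel pairing function $\pi:\lambda^+\times\lambda^+\to\lambda^+$. To ensure regressiveness, take $D$ to be the club of $\delta<\lambda^+$ with $\delta>\lambda$ and $\pi[\delta\times\delta]\subseteq\delta$ (closure points of $\pi$ form a club); since both coordinates are $<\alpha$ for $\alpha\in D$ (note $c(\cdot)<\lambda<\alpha$), $g(\alpha)<\alpha$. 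On points of $D$ outside $C\cap S^{\lambda^+}_\lambda$ set $g$ to be $0$, which is harmless since the required implication only concerns $\alpha,\beta\in C\cap D\cap S^{\lambda^+}_\lambda$ (and $0$ is regressive on $D\setminus\{0\}$, which we may assume by removing $0$ from $D$).

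Verification: if $g(\alpha)=g(\beta)$ for such $\alpha,\beta$, injectivity of $\pi$ gives $h(\alpha)=h(\beta)=:\gamma$ and equality of the $c$-values, so by injectivity of $c$ we get $e_\gamma(f_i(\alpha))=e_\gamma(f_i(\beta))$ for all $i$, and injectivity of $e_\gamma$ yields $f_i(\alpha)=f_i(\beta)$ for all $i<\theta$. The only point needing care is the use of $\cf(\alpha)=\lambda>\theta$ to bound the $f_i(\alpha)$ below $\alpha$, together with $\lambda^\theta=\lambda$ to keep the code below $\lambda$; this is the main (though mild) obstacle, and it is exactly where the hypotheses $\theta<\lambda$ and $\lambda^{<\lambda}=\lambda$ are used.
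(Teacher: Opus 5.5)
Your proof is correct and is essentially the paper's argument: on a club of closure points you code the tuple $\langle f_i(\alpha):\,i<\theta\rangle$ by a single ordinal below $\alpha$, using $\cf(\alpha)=\lambda>\theta$ and $\lambda^{<\lambda}=\lambda$. The only difference is that the paper fixes a bijection $H:{}^\theta\lambda^+\to\lambda^+$ outright, while you assemble such an injection by hand from the $e_\gamma$, $c$ and G\"odel pairing. One small slip: the least element of $C$ above $\sup_i f_i(\alpha)$ can be $\alpha$ itself when $\alpha$ is not a limit point of $C$ (e.g.\ $C=[\alpha,\lambda^+)$), so use your alternative $h(\alpha)=\max(\lambda,\sup_{i<\theta}f_i(\alpha)+1)$ instead.
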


\begin{proof} Let $H:{}^{\theta} \lambda^{+}\to \lambda^{+}$ be a bijection and let $D$ be a club of $\lambda^{+}$ such that for 
$\delta\in D\cap S^{\lambda^+}_\lambda$ we have
\[
(\forall \langle \beta_i:\;i<\theta\rangle \in {}^\theta\delta) [H ( \langle \beta_i:\;i<\theta\rangle)<\delta].
\]
For $\delta\in D$ let $g(\delta)=H ( \langle f_i(\delta):\;i<\theta\rangle)$. By the definition of $D$ we have that $g$ is regressive on $D$ and by the definition of $H$ we have that the required equation is satisfied.
$\eop_{\ref{lem:combining}}$
\end{proof}

\begin{definition}\label{def:strongclosure} A {\em strongly $(<\lambda)$-closed forcing notion} is a pair $({\mathbb P},F)$ where ${\mathbb P}$
is a forcing and
$F$ is a function which acts on increasing sequences of length $<\lambda$ of conditions in ${\mathbb P}$, so that for every 
such sequence $\bar{p}=\langle p_i:\,i<i^\ast\rangle$ where $i^\ast<\lambda$ we have that $F(\bar{p})$ is a common upper bound to $\bar{p}$.

In such circumstances we say that $F$ {\em witnesses} the strong $(<\lambda)$-closure of ${\mathbb P}$. If no confusion arises, we often just say that
${\mathbb P}$ strongly $(<\lambda)$-closed when we mean that we have fixed a function $F$ such that $({\mathbb P},F)$ is a strongly $(<\lambda)$-closed forcing notion.
$\eod_{\mbox{\tiny{Def.}} \ref{def:strongclosure}}$
\end{definition}

\begin{remark}\label{rem:whystrong} To clarify the difference between the strong  $(<\lambda)$-closure and the ordinary $(<\lambda)$-closure we must think in terms of where the function $F$ exists. In fact, the difference is visible only when we discuss the notions ``${\mathbb P}$ forces that $\name{Q}$ is a
$(<\lambda)$-closed forcing'' and ``${\mathbb P}$ forces that $\name{Q}$ is a strongly
$(<\lambda)$-closed forcing''. If ${\mathbb P}$ forces that $\name{Q}$ is a
$(<\lambda)$-closed forcing, then for every given sequence $\bar{q}=\langle \name{q}_i:\,i < i^\ast\rangle$ of names for $i^\ast<\lambda$ conditions in 
$\name{Q}$ and a condition $p$ which forces that the sequence $\langle{q}$ is increasing in $\name{Q}$, we can find a ${\mathbb P}$-name that $p$ forces to be a common upper bound of $\bar{q}$. But we do not necessarily have access to a function which gives us such a name uniformly. That is what the strong $(<\lambda)$-closure provides. 

In any application of forcing axioms or an iteration of concrete forcing, we necessarily work with strong  $(<\lambda)$-closure in which upper bounds of sequences like $\bar{q}$ are given functionally. So, although the two notions of $(<\lambda)$-closure are theoretically different from the point of view of proving iteration theorems, they are the same from the point of view of applications.
\end{remark}

\subsection{The two step iteration}\label{sec:twostep}

\begin{lemma}\label{lem:two-step} Suppose that $\mathbb P$ is a stationary $\lambda^+$-cc forcing notion and $\name{Q}$ is a $\mathbb P$-name for a stationary $\lambda^+$-cc forcing. Let $\bar{p}=\langle (p_\alpha, \name{q}_\alpha):\,\alpha<\lambda^+ \rangle$ be a sequence
of conditions in $\mathbb P\ast \name{Q}$ and let $\name{D}, \name{g}$ be $\mathbb P$-names witnessing that $\mathbb P$  forces that $\name{Q}$ satisfies the stationary $\lambda^+$-cc for the sequence $\bar{q}=\langle \name{q}_\alpha:\,\alpha<\lambda^+\rangle$.

Then the two step iteration $\mathbb P\ast \name{Q}$ satisfies:
\begin{enumerate}
\item  for every $\alpha<\lambda^+$, the set 
\[
\DD_\alpha=\{(p, \name{q})\in \mathbb P\ast \name{Q}:\,p \mbox{ decides the value of }\name{g}(\alpha) \mbox{ if defined}\}
\]
is
dense in $\mathbb P\ast \name{Q}$. Moreover, for every $(p, \name{q})\in \mathbb P\ast \name{Q}$, there is $r\in  \mathbb P$ such that 
$(r, \name{q})\in \DD$,
\item $\mathbb P\ast \name{Q}$  is stationary $\lambda^+$-cc.
\end{enumerate}
\end{lemma}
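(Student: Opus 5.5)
For part (1), I will start from an arbitrary $(p,\name{q})\in\mathbb P\ast\name{Q}$ and use only that the value of $\name{g}(\alpha)$, when defined, is forced to be an ordinal below $\alpha$. Below $p$ the set of conditions deciding ``$\alpha\in\name{D}$'' and, in the positive case, deciding the value of $\name{g}(\alpha)$, is dense in $\mathbb P$. So we can pick such an $r\le p$. Then $(r,\name{q})$ extends $(p,\name{q})$ and keeps the same second coordinate, which gives both density of $\DD_\alpha$ and the ``moreover'' clause. Here ``decides the value of $\name{g}(\alpha)$ if defined'' is read as: $r$ decides whether $\alpha\in\name{D}$ (the domain of $\name{g}$) and, if it forces $\alpha\in\name{D}$, decides the value.

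For part (2), fix the sequence $\bar{p}$ from the statement with its names $\name{D},\name{g}$. By Observation \ref{obs:implies-cc}, $\mathbb P$ is $\lambda^+$-cc, so Fact \ref{fact:clubs} gives a club $C\subseteq\lambda^+$ in $\mathbf V$ with $\forces_{\mathbb P} C\subseteq\name{D}$. For each $\alpha<\lambda^+$ I use part (1) to choose $r_\alpha\le p_\alpha$ and an ordinal $\gamma_\alpha$. For $\alpha\in C$ we have $\alpha\in\name{D}$ forced, so $r_\alpha\forces\name{g}(\alpha)=\gamma_\alpha$ with $\gamma_\alpha<\alpha$. Next I apply the stationary $\lambda^+$-cc of $\mathbb P$ to the sequence $\langle r_\alpha:\alpha<\lambda^+\rangle$, obtaining a club $E$ and a regressive $f$ on $E$. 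Define $h_1(\alpha)=\gamma_\alpha$ on $C$; this is regressive. Apply Fact \ref{lem:combining} with $\theta=2$ to $f$ and $h_1$ on the club $C\cap E$, obtaining a club $D'$ and a regressive $g^\ast$. The witnesses for $\mathbb P\ast\name{Q}$ are then the club $C\cap E\cap D'$ and the function $g^\ast$.

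To verify these witnesses, take $\alpha,\beta\in S^{\lambda^+}_\lambda$ in that club with $g^\ast(\alpha)=g^\ast(\beta)$. Then $f(\alpha)=f(\beta)$, so $r_\alpha,r_\beta$ have a common extension $r$. Also $\gamma_\alpha=\gamma_\beta$, so $r$ forces $\name{g}(\alpha)=\name{g}(\beta)$, with $\alpha,\beta\in\name{D}$ and both of cofinality $\lambda$. Cofinality $\lambda$ is preserved, since $\mathbb P$ is $\lambda^+$-cc and we are talking about the ordinals' cofinality in the extension. I expect this to be the one delicate point: strictly, the $S^{\lambda^+}_\lambda$ of the extension must be checked, and this follows if $\mathbb P$ adds no new sequences of length $<\lambda$. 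Alternatively I will only require that the definition in the extension is applied to the ground-model $S^{\lambda^+}_\lambda$, which I interpret as intended; if necessary I will note that $\lambda^+$-cc together with the relevant closure preserves cofinalities. Given this, $r$ forces that $\name{q}_\alpha,\name{q}_\beta$ are compatible in $\name{Q}$, so there is a name $\name{q}$ with $r$ forcing $\name{q}\ge\name{q}_\alpha,\name{q}_\beta$. Hence $(r,\name{q})$ extends both $(p_\alpha,\name{q}_\alpha)$ and $(p_\beta,\name{q}_\beta)$, which finishes the proof.
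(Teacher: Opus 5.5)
Your argument follows the paper's: a club $C$ forced into $\name{D}$ via Fact \ref{fact:clubs}, extending to decide $\name{g}(\alpha)$ to get a ground-model regressive function, merging via Fact \ref{lem:combining}, then the existential-completeness step in $\name{Q}$. It has one genuine improvement. You apply the stationary $\lambda^+$-cc of $\mathbb P$ to the already extended sequence $\langle r_\alpha:\alpha<\lambda^+\rangle$. The paper takes $E,f$ for $\langle p_\alpha:\alpha<\lambda^+\rangle$ and then asserts that $r_\alpha,r_\beta$ are compatible, which does not follow from the compatibility of $p_\alpha,p_\beta$. Your order is the right one. It is also the order the paper itself follows in the limit cases of Theorem \ref{th:iteration}, where the $p_\alpha$ are first moved into $\DD^{\zeta^\ast}_\alpha$.

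The step you flag is a real gap, and your stated justification is wrong: the $\lambda^+$-cc preserves cardinals and cofinalities $\ge\lambda^+$, not cofinality $\lambda$. For example, $\mathrm{Col}(\omega,\lambda)$ has size $\lambda$. So it is stationary $\lambda^+$-cc: on $\lambda^+\setminus\lambda$, let $f(\alpha)<\lambda$ be an index of $p_\alpha$ in an enumeration of the forcing. Yet in its extension $\lambda$ is countable, so no ordinal has cofinality $\lambda$. There ``$\name{Q}$ is stationary $\lambda^+$-cc'' holds vacuously, so $\name{Q}$ may have an antichain of size $\lambda^+$, and then $\mathbb P\ast\name{Q}$ is not even $\lambda^+$-cc.

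Thus this step cannot be proved from the hypotheses as stated, and the lemma needs an additional assumption. The precise one is that $\mathbb P$ keeps $\lambda$ regular. Then every ordinal of ground-model cofinality $\lambda$ keeps cofinality $\lambda$, since a cofinal $\lambda$-sequence from $\mathbf V$ remains cofinal, and $\lambda^+$ is preserved by the $\lambda^+$-cc. This holds, for instance, when $\mathbb P$ is $(<\lambda)$-closed, which is the situation in which the paper uses the lemma. Alternatively, read the forced stationary $\lambda^+$-cc of $\name{Q}$ relative to the ground-model $S^{\lambda^+}_\lambda$. State one of these explicitly rather than deriving it from the chain condition; the paper's proof passes over this point silently. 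With that assumption your proof is complete. You should also unify your order conventions, since you write $r_\alpha\le p_\alpha$ but $\name{q}\ge\name{q}_\alpha$.
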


\begin{proof} (1) This follows from the definition of a two-step iteration of forcing.

\smallskip

{\noindent (2)} Given a sequence $\bar{p}=\langle (p_\alpha, \name{q}_\alpha):\,\alpha<\lambda^+ \rangle$
of conditions in $\mathbb P\ast \name{Q}$, as in the statement of the Lemma. 
Let $E,f$ be a club and a regressive function witnessing that $\mathbb P$ is a stationary $\lambda^+$-cc for $\langle p_\alpha:\,\alpha<\lambda^+\rangle$
and let 
$\name{D}, \name{g}$ be $\mathbb P$-names witnessing that $\mathbb P$  forces that $\name{Q}$ is stationary $\lambda^+$-cc
for $\langle \name{q}_\alpha:\,\alpha<\lambda^+\rangle$.
By Observation \ref{obs:implies-cc},
the stationary 
$\lambda^+$-cc of $\mathbb P$ implies that $\mathbb P$ is $\lambda^+$-cc. By Fact \ref{fact:clubs} we can
find a club $C$ in $\mathbf V$ which is forced to be included in $\name{D}$. 

By item (1), for every $\alpha
\in C$ we can extend $p_\alpha$ to a condition $r_\alpha$ such that $r_\alpha$ decides the value $h(\alpha)$ of $\name{g}(\alpha)$. Hence $h$
is a function in $\mathbf V$ and for every $\alpha\in C$ we have that $h(\alpha)<\alpha$ (since it is forced by $\mathbb P$ that $\name{g}$ is regressive
on $\name{D}$). By Fact \ref{lem:combining}, there is a function $f^\ast$ regressive on $C\cap E$ such that 
for every $\alpha, \beta\in C\cap E \cap S^{\lambda^+}_\lambda$ we have
\[
[f^\ast(\alpha)=f^\ast(\beta)] \implies [f(\alpha)= f (\beta) \wedge h(\alpha)=h(\beta)].
\]
In particular, for such $\alpha, \beta$ we have that $r_\alpha, r_\beta$ are compatible. Say $r\ge r_\alpha, r_\beta$. Then $r\forces_{\mathbb P}``\name{g}(\alpha)=\name{g}(\beta)"$ and hence $r$ forces that there is a condition $\ge \name{q}_\alpha, \name{q}_\beta$.
By the Existential Completeness Lemma of forcing it follows that there is a name $\name{q}$ such that 
$r\forces ``\name{q}\ge \name{q}_\alpha, \name{q}_\beta"$. Therefore $(r, \name{q})$ is a common upper bound to $(r_\alpha, \name{q}_\alpha)$
and $(r_\beta, \name{q}_\beta)$, so also an upper bound to $(p_\alpha, \name{q}_\alpha)$ and $(p_\beta, \name{q}_\beta)$. This shows that
$E\cap C$ and $f^\ast$ witness that $\mathbb P\ast \name{Q}$ is stationary $\lambda^+$-cc for $\bar{p}$.
$\eop_{\ref{lem:two-step}}$
\end{proof}

\subsection{The general iteration}\label{sec:anystep}
We first isolate the following Lemma \ref{lem:strong-closure}, which is part of the General Iteration Theorem \cite[Th. 7.11]{Property-B}. We
repeat the proof for convenience.

\begin{lemma}\label{lem:strong-closure} Let $\mu\ge\lambda$
be a regular cardinal. Suppose that $\langle ({\mathbb P}_\zeta, 
\name{Q}_\zeta):\,\zeta<\zeta^*\rangle$ is a $(<\mu)$-support iteration of individual forcings such that for every $\zeta$ we have that 
${\mathbb P}_\zeta$ forces that $\name{Q}_\zeta$ is 
strongly $(<\lambda)$-closed. 

Then the limit ${\mathbb P}_{\zeta^\ast}$ of the iteration is
strongly $(<\lambda)$-closed, as witnessed by a function $h_{\zeta^\ast}$, such that for every $i^\ast<\lambda$ and an increasing
sequence $\bar{p}=\langle p_i:\,i<i^*\rangle$ in ${\mathbb P}_{\zeta^\ast}$, we have
\[
\supt(h_{\zeta^\ast}(\bar{p}))=\bigcup_{i<i^\ast}\supt(p_i).
\]
\end{lemma}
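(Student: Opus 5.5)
We build the witnessing functions $h_\zeta$ for $\zeta\le\zeta^\ast$ by induction on $\zeta$, coherently, so that for $\xi<\zeta$ and an increasing sequence $\bar{p}$ in ${\mathbb P}_\zeta$ we have $h_\zeta(\bar p)\rest\xi=h_\xi(\langle p_i\rest\xi:\,i<i^\ast\rangle)$. In fact we define $h_{\zeta^\ast}$ directly, coordinate by coordinate. For each $\zeta<\zeta^\ast$, use the Existential Completeness Lemma to fix a ${\mathbb P}_\zeta$-name $\name{F}_\zeta$ such that ${\mathbb P}_\zeta$ forces that $\name{F}_\zeta$ witnesses the strong $(<\lambda)$-closure of $\name{Q}_\zeta$. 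Given an increasing $\bar p=\langle p_i:\,i<i^\ast\rangle$ in ${\mathbb P}_{\zeta^\ast}$, let $u=\bigcup_{i<i^\ast}\supt(p_i)$. For $\zeta\in u$ we let $h_{\zeta^\ast}(\bar p)(\zeta)$ be a ${\mathbb P}_\zeta$-name for the following object. If $\langle p_i(\zeta):\,i<i^\ast\rangle$ is increasing in $\name{Q}_\zeta$, where we interpret $p_i(\zeta)$ as the trivial condition when $\zeta\notin\supt(p_i)$, then it is $\name{F}_\zeta(\langle p_i(\zeta):\,i<i^\ast\rangle)$. Otherwise it is $p_{0}(\zeta)$, or any fixed default such as the weakest condition. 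For $\zeta\notin u$ the coordinate is left trivial. The support of $h_{\zeta^\ast}(\bar p)$ is then exactly $u$, as required.

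Next we check that $q:=h_{\zeta^\ast}(\bar p)$ is a condition in ${\mathbb P}_{\zeta^\ast}$. Each coordinate is a ${\mathbb P}_\zeta$-name for an element of $\name{Q}_\zeta$ by construction. For the support, $|u|<\mu$ because $u$ is a union of fewer than $\lambda\le\mu$ sets each of size $<\mu$, and $\mu$ is regular. This is the only place where regularity of $\mu\ge\lambda$ is used.

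The main step is to show by induction on $\zeta\le\zeta^\ast$ that $q\rest\zeta\ge p_i\rest\zeta$ for all $i<i^\ast$. Limit stages are immediate from the definition of the order in the iteration, since that order is coordinatewise on restrictions. At a successor $\zeta+1$, we use the induction hypothesis $q\rest\zeta\ge p_i\rest\zeta$ and need
\[
q\rest\zeta\forces_{{\mathbb P}_\zeta} q(\zeta)\ge p_i(\zeta).
\]
Since $\bar p$ is increasing in ${\mathbb P}_{\zeta^\ast}$, we have $p_j\rest\zeta\forces p_i(\zeta)\le p_j(\zeta)$ for $i<j$. Hence $q\rest\zeta$, which lies above every $p_j\rest\zeta$, forces that the whole sequence $\langle p_i(\zeta):\,i<i^\ast\rangle$ is increasing. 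So $q\rest\zeta$ forces that the first clause of the definition applies, and then $\name{F}_\zeta$ yields an upper bound.

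I expect this successor step to be the main obstacle. The increasingness of the sequence of $\zeta$-th coordinates is only forced below $q\rest\zeta$, not outright. This is exactly why the definition of $q(\zeta)$ has to be a definition by cases inside the forcing, so that it is a legitimate name regardless. It is also why a uniformly given $\name{F}_\zeta$ is needed, rather than mere $(<\lambda)$-closure, as discussed in Remark \ref{rem:whystrong}. One must also handle coordinates where some $p_i$ have trivial entries. Treating those entries as the weakest element keeps the sequence increasing, so this causes no problem.
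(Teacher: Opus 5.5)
Your proof is correct and is essentially the paper's argument. The paper defines $h_\zeta$ by recursion on $\zeta$ so that $h_\zeta(\bar p)\rest\xi=h_\xi(\langle p_i\rest\xi:\,i<i^\ast\rangle)$, and unwinding that recursion gives exactly your coordinatewise definition $q(\zeta)=\name{F}_\zeta(\langle p_i(\zeta):\,i<i^\ast\rangle)$ for $\zeta\in u$, verified by the same key observation that $q\rest\zeta$ forces the $\zeta$-th coordinates to be increasing; your direct definition only streamlines this, dropping the paper's split of limit stages by cofinality (regularity of $\mu$ alone gives $|u|<\mu$) and making explicit, through the definition by cases, that each coordinate is a legitimate name even where increasingness is not forced.
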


\begin{proof} For every $\zeta<\zeta^\ast$, let $\name{f}_\zeta$ be a ${\mathbb P}_\zeta$-name such that ${\mathbb P}_\zeta$
forces that $(\name{Q}_\zeta, \name{f}_\zeta)$ is a strongly $(<\lambda)$-closed forcing notion.
The proof is by induction on ${\zeta^\ast}$, where we prove that ${\mathbb P}_{\zeta^\ast}$ is strongly $(<\lambda)$-closed as witnessed by a function $h_{\zeta^*}$ satisfying the requirement in the displayed equation. 

Suppose that $\langle p_i:\,i<i^*\rangle$ is an increasing sequence in ${\mathbb P}_{\zeta^\ast}$ and $i^\ast<\lambda$. By induction on 
$\zeta<\zeta^\ast$ we choose an upper bound $q_\zeta=h_\zeta(\langle p_i\rest\zeta:\,i<i^*\rangle)$, and we claim that we can do it so that 
for every $\xi<\zeta$ we have $q_\xi= q_\zeta\rest\xi$.

The case \underline{$\zeta=0$} is trivial. For the case \underline{$\zeta=\xi+1$}, this is like the proof that the iteration of two $(<\lambda)$-closed forcings is $(<\lambda)$-closed, but we need to make that property functional. By the induction hypothesis, ${\mathbb P}_\xi$ is a strongly 
$(<\lambda)$-closed forcing, as witnessed by a function $h_\xi$ satisfying the requirement of the Lemma.

Hence we may define $q_\xi=h_\xi(\langle p_i\rest\xi:\,i<i^*\rangle)$, since the
sequence $\langle p_i\rest\xi:\,i<i^*\rangle$ is increasing. Then $q_\xi$ is a common upper bound for $\langle p_i\rest\xi:\,i<i^*\rangle$ satisfying
$\supt(q_\xi)=\bigcup_{i<i^\ast}\supt(p_i\rest\xi)$.

If $\xi\notin \bigcup_{i<i^\ast}\supt(p_i)$, we define $q_\zeta=q_\xi\!\frown\!\emptyset_{\name{Q}_\xi}$ and we note that the requirements of the induction are preserved. Suppose otherwise. We then note that $q_\xi$ forces that 
$\langle p_i(\xi):\,i<i^\ast\rangle$ is increasing in $\name{Q}_\xi$ (since for each $i$ we have $p_{i+1}\rest\xi\forces_{{\mathbb P}_\xi}`` p_i(\xi)
\le_{\name{Q}_\xi} p_{i+1}(\xi)"$). Hence we have that $q_\xi$ forces that $\name{q}'_\xi$ defined by $\name{f}_\xi(\langle p_i(\xi):\,i<i^\ast\rangle)$ is a common upper bound to 
$\langle p_i(\xi):\,i<i^\ast\rangle$. 
Then $q_\zeta=(q_\xi, \name{q}'_\xi)$ is an upper bound for $\langle p_i\rest\zeta:\,i<i^*\rangle$ in ${\mathbb P}_\zeta$ and $q_\xi= q_\zeta\rest\xi$. 
Note the importance of the strong closure here which has allowed us to define $q_\zeta\rest \xi$ without changing $q_\xi$.

In conclusion, we may define 
\[
h_\zeta( \langle p_i:\,i<i^*\rangle)=
(h_\xi( \langle p_i\rest\xi:\,i<i^*\rangle), \name{f}_\xi (\langle p_i(\xi):\,i<i^\ast\rangle)).
\]
We note that we have preserved the requirements of the induction.

The case \underline{$\zeta>0$ is a limit ordinal of cofinality $<\mu$}. We intend to define $q_\zeta$ as the unique condition in ${\mathbb P}_\zeta$ satisfying that  for all $\xi<\zeta$ we have $q_\zeta\rest\xi=q_\xi$. This definition is functional, so once we have
checked that it is correct, we can define $h_\zeta( \langle p_i:\,i<i^*\rangle)=q_\zeta$.

We have to check that the support of such $q_\zeta$ is still of size $<\mu$, so let $j^\ast=\cf(\zeta)$, hence $j^\ast<\mu$, and let us fix
an increasing sequence $\langle \zeta_j:\,j<j^\ast\rangle$ with $\zeta=\sup_{j<j^\ast}\zeta_j$. By the induction hypothesis we have that
$\bigcup_{\xi<\zeta}\supt(q_\xi)=\bigcup_{j<j^\ast}\supt(q_{\zeta_j})$, so is of size $<\mu$ by the regularity of $\mu$.
In particular $q_\zeta$ is indeed a condition in 
${\mathbb P}_\zeta$ and, clearly, for $\xi<\zeta$ we have $q_\zeta\rest\xi=q_\xi$. We have that $q_\zeta$ is a common upper bound
for $\langle p_i:\,i<i^*\rangle$. Moreover, the requirement $\supt(q_\zeta)=\bigcup_{i<i^\ast}\supt(p_i)$ is maintained.

The case \underline{$\zeta$ is a limit ordinal of cofinality $\ge\mu$} follows by defining $h_\zeta(\langle p_i:\,i<i^\ast\rangle)$ as follows.
First we find the first $\xi<\zeta$ such that $\bigcup_{i<i^\ast}\supt(p_i)\subseteq \xi$. Such $\xi$ exists since $\cf(\zeta)\ge\mu>i^\ast$.
Then we let 
\[
q_\zeta=h_\zeta(\langle p_i:\,i<i^\ast\rangle)=h_\xi(\langle p_i\rest\xi:\,i<i^\ast\rangle)\!\frown\![\emptyset_{\name{Q}_\varepsilon})_{\varepsilon\in [\xi,\zeta)},
\]
where $h_\xi(\langle p_i\rest\xi:\,i<i^\ast\rangle)$ is well-defined by the induction hypothesis. It also follows that $q_\zeta\rest \varepsilon=q_\varepsilon$ for
every $\varepsilon<\zeta$ and the requirement on the supports is maintained, as required.
$\eop_{\ref{lem:strong-closure}}$
\end{proof}

Lemma \ref{lem:strong-closure} isolated a property that will be important in the sequel, so we give it a name.

\begin{definition}\label{def:continously-strongly closed} We say that the limit ${\mathbb P}_{\zeta^\ast}$ of an iteration of forcing is 
{\em continuously strongly $(<\lambda)$-closed} if it is strongly $(<\lambda)$-closed and this can be witnessed by a function $h$
satisfying that for every $i^\ast<\lambda$ and an increasing  sequence $\bar{p}=\langle p_i:\,i<i^\ast\rangle$ of conditions in 
${\mathbb P}_{\zeta^\ast}$, we have $\supt(h(\bar{p}))=\bigcup_{i<i^\ast}\supt(p_i)$.
$\eod_{\mbox{\tiny{Def.}} \ref{def:continously-strongly closed}}$
\end{definition}

Hence, Lemma \ref{lem:strong-closure} proved that an iteration of $(<\lambda)$-strongly forcing with supports of size $<\mu$ for any regular 
$\mu\ge\lambda$ is continuously strongly $(<\lambda)$-closed.

\begin{theorem}\label{th:iteration} Suppose that $\langle ({\mathbb P}_\zeta, \name{Q}_\zeta):\,\zeta<\zeta^\ast\rangle$ is an iteration made with supports of size $(<\lambda)$ of individual forcings $\name{Q}_\zeta$ which are:
\begin{itemize}
\item stationary $\lambda^{+}$-cc and
\item strongly $(<\lambda)$-closed.

\end{itemize}

Then the limit ${\mathbb P}_{\zeta^\ast}$ of the iteration satisfies:
\begin{enumerate}
\item ${\mathbb P}_{\zeta^\ast}$ is continuously strongly $(<\lambda)$-closed, as witnessed by some function $h_{\zeta^\ast}$,
\item for every $\alpha<\lambda^+$,
the set 
\[
\DD^{\zeta^\ast}_\alpha=\{p\in {\mathbb P}_{\zeta^\ast}:\,(\forall \zeta\in \supt(p))[p\rest \zeta\mbox{ decides the value of }\name{f}_\zeta(\alpha) \mbox{ if defined}]\}
\]
is
dense in ${\mathbb P}_{\zeta^\ast}$,
\item  
${\mathbb P}_{\zeta^\ast}$ satisfies stationary $\lambda^{+}$-cc.
\end{enumerate}
\end{theorem}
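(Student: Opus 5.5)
Item (1) is Lemma \ref{lem:strong-closure} applied with $\mu=\lambda$: since $\lambda^{<\lambda}=\lambda$ and $\lambda$ is uncountable, $\lambda$ is regular, so supports of size $<\lambda$ are $(<\mu)$-supports with $\mu=\lambda$ regular. This yields the function $h_{\zeta^\ast}$ with $\supt(h_{\zeta^\ast}(\bar p))=\bigcup_{i<i^\ast}\supt(p_i)$. In item (2), $\name{f}_\zeta$ is read as a $\mathbb P_\zeta$-name for the regressive function (with its club $\name{E}_\zeta$) witnessing the stationary $\lambda^+$-cc of $\name{Q}_\zeta$ for the relevant sequence. Having a fixed assignment of such names is part of the data, in the same functional spirit as Remark \ref{rem:whystrong}.

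For (2), given $p$ and $\alpha$, I will build an increasing sequence $\langle p_n:n<\omega\rangle$ with $p_0=p$, together with bookkeeping of $\supt(p_n)$. For each $\zeta\in\supt(p_n)$, the restriction $p_n\rest\zeta$ must be extended to decide $\name{f}_\zeta(\alpha)$. This is done by an inner induction over the coordinates of $\supt(p_n)$, enumerated in increasing order (order type $<\lambda$). At each stage, extend the $\rest\zeta$ part to decide $\name{f}_\zeta(\alpha)$, keep the tail, and take limits via $h$ from (1), which preserves the support as the union. The problem is that extending $p_n\rest\zeta$ may enlarge the support below $\zeta$, creating new coordinates that still need deciding. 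So I iterate $\omega$ times, arranging at step $n+1$ that every coordinate in $\supt(p_n)$ is handled. Then I take $q=h_{\zeta^\ast}(\langle p_n\rangle)$, whose support is $\bigcup_n\supt(p_n)$, still of size $<\lambda$ by regularity and $\omega<\lambda$. Any $\zeta\in\supt(q)$ lies in some $\supt(p_n)$, and $p_{n+1}\rest\zeta$ already decides $\name{f}_\zeta(\alpha)$, hence so does $q\rest\zeta$.

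For (3), take $\langle p_\alpha:\alpha<\lambda^+\rangle$. For each $\alpha$, use (2) to strengthen $p_\alpha$ to $p'_\alpha\in\DD^{\zeta^\ast}_\alpha$. By Fact \ref{fact:clubs}, each name $\name{E}_\zeta$ is forced to contain a ground-model club $C_\zeta$. Apply a $\Delta$-system argument to $\supt(p'_\alpha)$, which is possible since $\lambda^{<\lambda}=\lambda$. Concretely, I will fix an enumeration of these supports in order type $<\lambda$ and a club $C$ such that for $\delta\in C\cap S^{\lambda^+}_\lambda$ the set $\supt(p'_\delta)\cap\sup_{\beta<\delta}\ldots$ is bounded appropriately. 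The goal is a regressive function $g_0$ encoding the order type of $\supt(p'_\delta)$ and the ``root'' $\supt(p'_\delta)\cap\bigcup_{\beta<\delta}\supt(p'_\beta)$, as in Fodor-type $\Delta$-system proofs. Then define $g(\delta)$ by combining, via Fact \ref{lem:combining} (there are $<\lambda$ many functions since supports have size $<\lambda$), the following: $g_0(\delta)$; for each root coordinate $\zeta$, the decided value of $\name{f}_\zeta(\delta)$; and the indexing of which coordinate is which. The club is obtained by intersecting the $C_\zeta$ over the relevant $\zeta$, which requires care because there are $\lambda^+$ coordinates. I plan to handle this with a diagonal intersection over an enumeration of $\bigcup_\alpha\supt(p'_\alpha)$ in type $\le\lambda^+$, which is again a club.

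The main obstacle is compatibility. If $g(\alpha)=g(\beta)$, I must build a common upper bound by induction on $\zeta\le\zeta^\ast$. Off the root, only one condition is active, so I copy it. At a root coordinate $\zeta$, the partial bound $r\rest\zeta$ extends both $p'_\alpha\rest\zeta$ and $p'_\beta\rest\zeta$, which decide $\name{f}_\zeta(\alpha)=\name{f}_\zeta(\beta)$. Hence $r\rest\zeta$ forces $p'_\alpha(\zeta),p'_\beta(\zeta)$ compatible, and I pick a name for a common upper bound, as in Lemma \ref{lem:two-step}. Limits are just unions, since supports stay of size $<\lambda$. The delicate points are ensuring $\alpha,\beta\in C_\zeta$ for the root coordinates, which the diagonal intersection should give for $\delta$ beyond the index of $\zeta$, and checking that the root is genuinely shared via the regressive encoding. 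I expect to spend most effort making the $\Delta$-system-by-regressive-function precise.
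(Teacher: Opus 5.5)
Unlike the paper, which argues by induction on $\zeta^\ast$ with a case split on $\cf(\zeta^\ast)$, you attempt one global $\Delta$-system argument. Your (1) is the paper's, and your $\omega$-step proof of (2) is correct for any \emph{fixed} family of names $\name{f}_\zeta$. The gap is in (3), at the step ``$r\rest\zeta$ extends $p'_\alpha\rest\zeta$ and $p'_\beta\rest\zeta$, which decide $\name{f}_\zeta(\alpha)=\name{f}_\zeta(\beta)$, hence $r\rest\zeta$ forces $p'_\alpha(\zeta)\parallel p'_\beta(\zeta)$''. A stationary $\lambda^+$-cc witness belongs to one particular sequence (Definition \ref{def:statt-cc}). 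So this inference needs $\name{f}_\zeta$ to witness the property for $\langle p'_\gamma(\zeta):\gamma<\lambda^+\rangle$, the coordinates of the \emph{strengthened} conditions. But the $p'_\gamma$ are obtained from the $\name{f}$'s. Deciding $\name{f}_{\zeta'}(\gamma)$ for $\zeta'>\zeta$ extends $p_\gamma\rest\zeta'$, which in general strengthens $p_\gamma(\zeta)$ or adds $\zeta$ to the support. If $\name{f}_\zeta$ was chosen for the original sequence, equal decided values give only $p_\alpha(\zeta)\parallel p_\beta(\zeta)$, which says nothing about $p'_\alpha(\zeta),p'_\beta(\zeta)$. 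Choosing it for the final sequence is circular. Re-choosing witnesses at each of your $\omega$ stages does not help either: $p^n_\alpha(\zeta)\parallel p^n_\beta(\zeta)$ for all $n$ does not make compatible the bounds that the closure function assigns to the two increasing sequences. Fixing a witness for each sequence is harmless; what is missing is an order in which every witness is chosen for a sequence that is already final.

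That order is what the paper's induction provides. At a successor step, Lemma \ref{lem:two-step} fixes a witness only for the top coordinate, whose entries $\name{q}_\alpha$ are never changed again. The lower parts are extended to $r_\alpha$, and the stationary $\lambda^+$-cc of $\mathbb P_\zeta$ (induction hypothesis) is then applied to $\langle r_\alpha:\alpha<\lambda^+\rangle$ itself (the text says $\langle p_\alpha\rangle$, but it must be the extended sequence). At limits of cofinality $\ge\lambda$ (Cases 4--6), compatibility of $p_\alpha,p_\beta$ is reduced to compatibility of their restrictions to a single $\mathbb P_{\zeta_j}$, via bounded supports or a $\Delta$-system root below a fixed $\zeta_j$. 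There the induction hypothesis applies as is. The induction also gives the $\lambda^+$-cc of $\mathbb P_\zeta$ that Fact \ref{fact:clubs} needs for your clubs $C_\zeta$, which you use without justification. Your $\Delta$-system coding is otherwise sound. Since the roots have no uniform size bound $\theta<\lambda$, code the root and its decided values directly using $(\lambda^+)^{<\lambda}=\lambda^+$ rather than via Fact \ref{lem:combining}. Be aware, though, that no top-down order exists at limits of cofinality $<\lambda$ with cofinal supports. There the paper's Case 3 derives a coherent sequence of bounds $r_j$ from mere compatibility of each pair $p_\alpha\rest\zeta_j,p_\beta\rest\zeta_j$, which does not follow. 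Coherent coordinatewise bounds of the kind you aim for are exactly what that case needs, and neither your proposal nor the paper supplies them.
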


\begin{proof}(of Theorem  \ref{th:iteration}) The proof if by induction on $\zeta^\ast$. The part (1) is  proven in Lemma \ref{lem:strong-closure}, where we need to take 
$\mu=\lambda$. Throughout the proof, we shall be taking advantage of the displayed equation in the statement of Lemma \ref{lem:strong-closure}.

We prove the items (2)-(3) simultaneously. Note that the sets $\DD^{\zeta^\ast}_\alpha$ are not open even though they are dense, since increasing a condition
in $\DD^{\zeta^\ast}_\alpha$ will in general change its support and potentially take it out of $\DD^{\zeta^\ast}_\alpha$. This point, as we shall see, will be handled by the fact that we have continuous strong $(<\lambda)$-closure. 

\medskip

{\em Case 1}. \underline{$\zeta^\ast=0$}. This case is trivial.

\medskip

{\em Case 2}.  \underline{$\zeta^\ast=\zeta+1$ for some $\zeta$}. We hence have that $\mathbb P_{\zeta^\ast}=\mathbb P_{\zeta}\ast \name{Q}_\zeta$. 
For item (2), given $(p, \name{q})\in \mathbb P_{\zeta}\ast \name{Q}_\zeta$ and $\alpha<\lambda^+$. By the induction hypothesis, we can find $r_0\in \DD^\zeta_\alpha$ with $r_0\ge p$. As in (1) of Lemma \ref{lem:two-step}, we find $r_1\ge r_0$ such that $r\in \mathbb P_{\zeta}$ and
$r$ decides the value of $\name{f}_\zeta(\alpha)$ if defined. We cannot conclude that $(r_1, \name{q})$ is in $\DD^{\zeta^\ast}_\alpha$, as we would be allowed to do if $\DD^{\zeta^\ast}_\alpha$ were open. However, we can extend $r_1$ to $r_2\in \DD^\zeta_\alpha$, by the induction hypothesis and then 
$(r_2, \name{q})$ is an extension of $(p, \name{q})$ which is in $\DD^{\zeta^\ast}_\alpha$.

We have already proven (3) in Lemma \ref{lem:two-step}. The only difference here is that we choose each $r_\alpha\in \DD^{\zeta^\ast}_\alpha$ first,
which is the reason we need (2).  

\medskip

{\em Case 3}. \underline{$\zeta^\ast>0$ is a limit ordinal and $j^\ast=\cf(\zeta^\ast)<\lambda$}. Let $\langle \zeta_j:\,j<j^\ast\rangle$ be a continuous increasing sequence of ordinals with $\sup_{j<j^\ast}\zeta_j=\zeta^\ast$ and $\zeta_0=0$.  We first prove (2). Given $\alpha<\lambda^+$ and a condition $p$. By induction on $j<j^\ast$ we shall choose $\langle r_j:j<j^\ast\rangle$ so that:
\begin{itemize}
\item $r_j\in \DD^{\zeta_j}_\alpha$,
\item $r_{j+1}\ge r_j\frown p\rest [\zeta_j, \zeta_{j+1})$,
\item for $j$ a limit ordinal $r_j=h_{\zeta_j}(\langle r_k \frown p\rest [\zeta_k, \zeta_{k+1}):\,k<j\rangle)$.
\end{itemize}
The induction is quite straightforward. Namely, the initial step is trivial since $\zeta_0=0$. Arriving at $j+1$ we first note that $r_j\frown p\rest [\zeta_j, \zeta_{j+1})$ is a condition in ${\mathbb P}_{\zeta_{j+1}}$, since $r_j \ge p\rest  \zeta_j$. Then by the induction hypothesis (2) applied to
$\zeta_{j+1}$ we can find 
$r_{j+1}\ge r_j\frown p\rest [\zeta_j, \zeta_{j+1})$ with $r_{j+1}\in \DD^{\zeta_{j+1}}_\alpha$. Note that we have possibly increased the support of 
$r_j$. Arriving at a limit ordinal $j$, we have by the choice of the
conditions $r_k$ for $k<j$ that $\langle r_k \frown p\rest [\zeta_k, \zeta_{k+1}):\,k<j\rangle$ is an increasing sequence in ${\mathbb P}_{\zeta_j}$.
Since $j<\lambda$, the upper bound $h_{\zeta_j}(\langle r_k \frown p\rest [\zeta_k, \zeta_{k+1}):\,k<j\rangle)$ is well defined, so $r_j$ is a 
well defined condition. By the fact that $h$ witnesses the continuous strong $(<\lambda)$-closure of ${\mathbb P}_{\zeta_j}$, we have that
$\supt(r_j)=\bigcup_{k<j}\supt(r_k)$. Since for every $k<j$ we have that $r_k\in \DD^{\zeta_k}_\alpha$, it follows 
by the definition of $\DD^{\zeta_j}_\alpha$ that $r_j\in \DD^{\zeta_j}_\alpha$. This finishes the proof of (3).

For (4), suppose that $\langle p_\alpha:\,\alpha<\lambda^{+}\rangle$
is a given sequence of conditions in ${\mathbb P}_{\zeta^\ast}$. By (3), without loss of generality, every $p_\alpha$ is an element of
$\DD^{\zeta^\ast}_\alpha$. By the induction hypothesis every ${\mathbb P}_{\zeta_j}$ is stationary $\lambda^{+}$-cc. In particular for every $j<j^\ast$, there is a club $C_j\subseteq \lambda^{+}$ in $\mathbf V$ and a regressive function $f_j$ on $C_j$ witnessing this for $\langle p_\alpha\rest \zeta_j:\,\alpha<\lambda^{+}\rangle$. Let $D$ and $g$ as provided by Fact \ref{lem:combining}, using $\langle f_j:\,j<j^\ast\rangle$ as the input.
Suppose now that $\alpha, \beta\in D\cap S^{\lambda^+}_\lambda$ are such that 
$g(\alpha)=g(\beta)$.
In particular, for every $j<j^\ast$, the conditions $p_\alpha\rest \zeta_j , q_\alpha\rest\zeta_j$ are compatible. By (1) applied to each $\zeta_j$, using the technique of the proof of Lemma \ref{lem:strong-closure} with $i^\ast=2$, we have
a concretely defined upper bound $r_j$ of $p_\alpha\rest \zeta_j , q_\alpha\rest\zeta_j$. Moreover, this definition is such that $j<j'\implies r_j'\rest\zeta_j=r_j$.
Then the required upper bound $r$ for $p_\alpha$ and $p_\beta$ is the unique condition $r\in {\mathbb P}_{\zeta^\ast}$ such that for every $j<j^\ast$
we have  $r\rest\zeta_j=r_j$.

\medskip

{\em Case 4}. \underline{$\zeta^\ast>0$ is a limit ordinal and $\cf(\zeta^\ast)=\lambda$}. 
Let $\langle \zeta_j:\,j<\lambda\rangle$ be a continuous increasing sequence of ordinals with $\sup_{j<\lambda}\zeta_j=\zeta^\ast$ and $\zeta_0=0$.
For (3) let $\alpha$ and $p$ be given. Note that there is $j$ such that $\supt(p)\subseteq \zeta_j$. By the induction hypothesis there is $r\in 
\DD^{\zeta_j}_\alpha$ with $r\ge p\rest \zeta_j$. Then $r\!\frown\!p\rest [\zeta_j, \zeta^\ast)$ is an extension of $p$ and an element of 
$\DD^{\zeta^\ast}_\alpha$.

Now we prove (4).
Suppose that $\langle p_\alpha:\,\alpha<\lambda^{+}\rangle$ in  ${\mathbb P}_{\zeta^\ast}$ is given. As before, we may extend each $p_\alpha$ to be in
$\DD^{\zeta^\ast}_\alpha$. In particular, we may assume that the stationary $\lambda^{+}$-cc of each ${\mathbb P}_{\zeta_j}$ with respect to 
$\langle p_\alpha\rest\zeta_j:\,\alpha<\lambda^{+}\rangle$  is
witnessed by a club $E_j$ of $\lambda^+$ and a regressive function $g_j$ defined on $E_j$. Let $E=\bigcap_{j<\lambda}E_j$.
Define a function $c:\,\lambda^+\to \lambda$
by 
\[
c(\alpha)=\min\{j<\lambda:\,\supt(p_\alpha)\subseteq \zeta_j\}.
\]
Hence $c$ is a regressive function on $\lambda^+\setminus \lambda$. Let $C=E\setminus \lambda$ and define $h$ on $C$ by $h(\alpha)=g_{c(\alpha)}(\alpha)$.
Note that $h$ is regressive.
By Fact \ref{lem:combining}, there is a club $D\subseteq C$ and a regressive function $f$ on $D$ such that for every $\alpha, \beta\in D\cap S^{\lambda¨+}_\lambda$
we have
\[
f(\alpha)=f(\beta) \implies [c(\alpha)=c(\beta) \wedge h(\alpha)=h(\beta)].
\]
We claim that for such $\alpha, \beta$ the conditions $p_\alpha, p_\beta$ are compatible. Indeed, let $j=c(\alpha)=c(\beta)$. Then
$\supt(p_\alpha), \supt(p_\beta)\subseteq \zeta_j$ and $p_\alpha\rest \zeta_j, p_\beta\rest \zeta_j$ are compatible by $g_j(p_\alpha))=g_j(p_\beta)$.
So $p_\alpha, p_\beta$ are compatible.

\medskip

{\em Case 5}. \underline{$\zeta^\ast>0$ is a limit ordinal and $\cf(\zeta^\ast)=\lambda^+$}. Let $\langle \zeta_j:\,j<\lambda\rangle$ be a continuous increasing sequence of ordinals with $\sup_{j<\lambda}\zeta_j=\zeta^\ast$ and $\zeta_0=0$.
The proof of (3) is the same as in the previous case.

For the proof of (4), let us use the same notation for
$\langle p_\alpha:\,\alpha<\lambda^+\rangle$, $E_j$'s and $g_j$'s. Let us define $E=\Delta_{j<\lambda^+}E_j$. 
We keep the same formula for $c$,
but it is now a function from $\lambda^+$ to $\lambda^+$. We observe that $|S=\bigcup_{\alpha<\lambda^+} \supt(p_\alpha)|\le\lambda^{+}$. If $|S|<\lambda^{+}$,
then there is $\zeta<\zeta^\ast$ such that for all $\alpha<\lambda^{+}$ we have $\supt(p_\alpha)\subseteq \zeta$ and hence the desired conclusion (4) follows from the induction hypothesis.  So we may assume that $|S|=\lambda^{+}$. Let
$S=\{\varepsilon_\alpha:\,\alpha<\lambda^{+}\}$ be the increasing enumeration. Let 
$t:\,\lambda^+\to\lambda$ be given by 
\[
t(\alpha)=\sup(\supt(p_\alpha)\cap \varepsilon_\alpha)).
\]
Therefore, $t$ is a function which is regressive on $S^{\lambda^{+}}_\lambda$. 

There is a club $C$ of $\lambda^+$ such that for every $\alpha\in S^{\lambda^+}_\lambda$.
for every
$\gamma<\alpha$ we have 
\begin{itemize}
\item $c(\gamma)<\varepsilon_\alpha$ and
\item $\varepsilon_\gamma<\alpha$.
\end{itemize}
Without loss of generality, $C\subseteq E$. Define a function $h(\alpha)=g_{t(\alpha)}(\alpha)$, hence $h$ is a regressive function on $C$. By applying
Fact \ref{lem:combining}, there is function $f$ regressive on  a club $D\subseteq C$ such that for every $\alpha, \beta\in D$, if $f(\alpha)=f(\beta)$, then
$t(\alpha)=t(\beta)$ and $h(\alpha)=h(\beta)$.

Suppose that $\alpha<\beta$ are both in $S^{\lambda^+}_\lambda \cap D$ and $f(\alpha)=f(\beta)$. Hence
$t=t(\alpha)=t(\beta)$ and $g_t(\alpha)= g_t(\beta)$.
Then $j=c(\alpha)<\varepsilon_\beta$ and $p_\alpha, p_\beta$ are compatible iff $p_\alpha\rest \zeta_j$ and $p_\beta\rest \zeta_j$ are compatible. Moreover, 
\[
\sup(\supt(p_\beta)\cap \zeta_j )\le \sup(\supt(p_\beta)\cap \varepsilon_\beta)=t=\sup(\supt(p_\alpha)\cap \varepsilon_\alpha)\le\varepsilon_\alpha<\beta.
\]
In particular, $\beta\in E_t$. We have that $t=t(\alpha)<\alpha$, so $\alpha\in E_t$ as well. On the other hand, $p_\alpha\rest \zeta_j, p_\beta\rest \zeta_j$ are compatible
iff $p_\alpha\rest t$ and $p_\beta\rest t$ are compatible, since $\supt(p_\beta)\cap [t,\zeta_j )=\emptyset$. But $p_\alpha\rest t$ and $p_\beta\rest t$ are compatible since $g_t(\alpha)=g_t(\beta)$. This shows that $D$ and $h$ witness that ${\mathbb P}_{\zeta^\ast}$ is $\lambda^+$-stationary-cc for
$\langle p_\alpha:\,\alpha<\lambda^+\rangle$, and since $\langle p_\alpha:\,\alpha<\lambda^+\rangle$ was arbitrary, ${\mathbb P}_{\zeta^\ast}$ is $\lambda^+$-stationary-cc.

\medskip
{\em Case 6}. \underline{$\zeta^\ast>0$ is a limit ordinal and $\cf(\zeta^\ast)>\lambda^+$}. (3) has the same proof as in the Cases 3. and 4. For (4), this case follows by the induction hypothesis, since in this case for every sequence $\langle p_\alpha:\,\alpha<\lambda^+\rangle$ of conditions in 
${\mathbb P}_{\zeta^\ast}$ we have $\sup[\bigcup_{\alpha<\lambda^+}\supt(p(\alpha))]<\zeta^\ast$.
$\eop_{\ref{th:iteration}}$
\end{proof}

\section*{Acknowledgements} 
The author used the AI product GROK 3 by X-AI, which served as a research assistant for rough historical facts, finding references and {\LaTeX} questions. 
All mathematical developments are due entirely to the author unassisted by the AI.

\bibliography{../bibliomaster}
\bibliographystyle{plainurl}

\end{document}